\documentclass[11pt]{amsart}
\usepackage{geometry}
\usepackage{amsmath, amssymb}
\usepackage{amsmath, amscd}
\usepackage{amsfonts}
\usepackage{amsrefs}

\usepackage{hyperref}
\usepackage{cleveref}
\usepackage{mathrsfs}
\usepackage{setspace}
\usepackage[dvipsnames]{xcolor}
\usepackage{pgffor}

\usepackage{pb-diagram} 
\usepackage{pb-xy}
\usepackage{graphicx}

\usepackage{amsthm}
\usepackage{subcaption}

\usepackage{color}
\usepackage{float}
\usepackage{xcolor}

\usepackage{xfrac}

\usepackage{tikz}\usetikzlibrary{knots}

\allowdisplaybreaks

\makeatletter
\newtheorem*{rep@theorem}{\rep@title}
\newcommand{\newreptheorem}[2]{%
\newenvironment{rep#1}[1]{%
 \def\rep@title{#2 \ref{##1}}%
 \begin{rep@theorem}}%
 {\end{rep@theorem}}}
\makeatother

\newtheorem{theorem}{Theorem}
\newreptheorem{theorem}{Theorem}

\newtheorem{proposition}[theorem]{Proposition}
\newtheorem{lemma}[theorem]{Lemma}

\newtheorem*{theorem*}{Theorem}
\newtheorem*{proposition*}{Proposition}
\theoremstyle{remark}

\newtheorem{definition}[theorem]{Definition}

\newcommand{\lk}{\operatorname{lk}}

\newcommand{\M}{\mathcal{M}}
\newcommand{\N}{\mathbb{N}}
\newcommand{\Z}{\mathbb{Z}}
\newcommand{\R}{\mathbb{R}}

\newcommand{\bdry}{\partial}

\newcommand{\Sum}{\displaystyle\sum}

\newcommand{\Oint}{\displaystyle\oint}

\newcommand{\ceil}[1]{\left\lceil #1 \right\rceil}
\newcommand{\floor}[1]{\left\lfloor #1 \right\rfloor}

\newcommand{\pref}[1]{(\ref{#1})}

 \newcommand{\comment}[1]{}
\begin{document}

\title[Bounds on the clasp number.]{$C$-complexes, Clasp Number, and Triple Linking Number}

\author{Christopher William Davis}
\address{Department of Mathematics, University of Wisconsin-Eau Claire, Hibbard Humanities Hall 508,  Eau Claire WI 54702-4004}
\email{daviscw@uwec.edu}

\author{David Lawrence}
\address{Department of Mathematics, University of Wisconsin-Eau Claire, Hibbard Humanities Hall 508,  Eau Claire WI 54702-4004}
\email{lawrencd8578@uwec.edu}

\author{Jack Paulsen}
\address{Department of Mathematics, University of Wisconsin-Eau Claire, Hibbard Humanities Hall 508,  Eau Claire WI 54702-4004}
\email{paulsejd4608@uwec.edu}

\author{Nathan Phillips}
\address{Department of Mathematics, University of Wisconsin-Eau Claire, Hibbard Humanities Hall 508,  Eau Claire WI 54702-4004}
\email{phillins0745@uwec.edu}

\date{\today}

\subjclass[2010]{}

\keywords{}

%

\begin{abstract} A C-complex is a union of Seifert surfaces for the components of a link which intersect each other in clasps. The clasp number of a link is the minimal number of clasps amongst all C-complexes it bounds. It gives a measure of how far a link is form being a boundary link. This paper provides a new lower bound for the number of clasps of all C-complexes bounded by a given 3-component link improving results of Amundsen-Anderson-D.-Guyer. Furthermore, we construct links that achieve these bounds. 
In order to do so, we express the triple linking numbers as the area bounded by three curves in the plane, called word curves, and then perform the geometry and discrete optimization needed to minimize the length of these curves.
\end{abstract}


\maketitle

\section{Introduction}

In his thesis work Cooper \cite{CooperThesis, Cooper82} defined an object called a C-complex (or clasp-complex), see for example Figure~\ref{fig: Sample computation}.  These objects give useful generalizations of Seifert surfaces to the setting of links.  Informally a C-complex for a link $L$ consists of a union of of Seifert surfaces for the components of $L$, where these individual surfaces are allowed to intersect each other, but only in clasps.  We recall the precise definition in Section~\ref{background}.  He used these objects to extend the classical computation of the Alexander module to the setting of links.  Further work championed by Cimasoni \cite{Cimasoni2004} starting in the early 2000's reinvigorated the study of C-complexes. Since then they have been used as an avenue to understand link theory.  For example, they can be used to understand algebraic invariants like the signature, Alexander polynomial and Blanchfield form \cite{CimFlo2008, CimTur07,  CimConZac, CFT18, DFL24} and they motivate a notion of S-equivalence \cite{CimTur07, DMOP21}.

\begin{figure}
         \centering
         \begin{tikzpicture}
        \node at (0,0){\includegraphics[width=.25\textwidth]{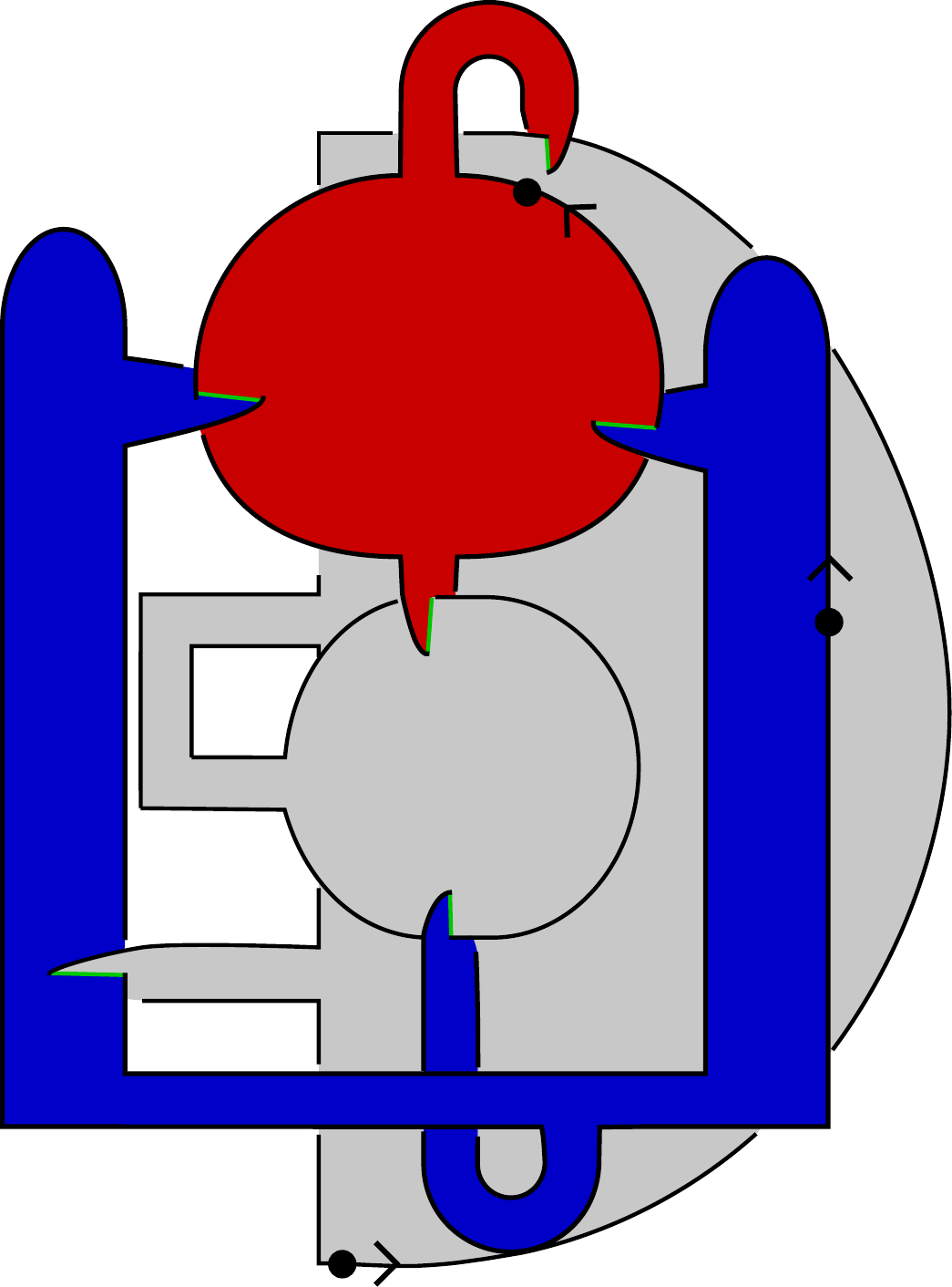}};
        \node at (0,3){$L_1$};
        \node at (-1,-3){$L_3$};
        \node at (-2,2){$L_2$};
         \end{tikzpicture}

        \caption{A link with $\mu_{123}(L)=3$ bounding a C-complex with 6 clasps.  Our work will prove that every $C$-complex bounded by any link with $\mu_{123}(L)=3$ has at least 6 clasps.}
        \label{fig: Sample computation}
\end{figure}

There are several natural measures of the complexity of a link coming from a C-complex.  One can ask about the sum of the genera of its components, the first Betti number of its union, or the number of clasps.  We focus on the latter. The \textbf{clasp number} of $L$ is the minimal number of clasps amongst all C-complexes bounded by $L$.  We denote it by $C(L)$.  In \cite[Theorem 2]{AADG20} the first author, along with Amundsen, Anderson and Guyer, reveals that for 2-component links the clasp number is basically determined by the linking number.  In that paper also appears a lower bound on $C(L)$ for 3-component link.    In the current work, we build on this idea and find a stronger bound on $C(L)$ for 3-component links.  Our bound is sharp.  

 Our work relies on Milnor's triple linking number and so we begin by recalling some of its history.  In the 1950's \cite{Milnor1950} Milnor built a new infinite family of invariants which generalize linking number.  While the original definition relies on an in-depth algebraic analysis of lower central series quotients and the Magnus embedding, they have been reformulated in a myriad of ways.  In \cite{CochranBook}, Cochran explained how to compute Milnor's invariants from iterated intersections of embedded surfaces.

 In \cite{MellorMelvin2003} Mellor-Melvin built on Cochran's work to translate this computation to a combinatorial formulation for Milnor's triple linking number, $\mu_{123}$, in terms of a C-complex.  In truth, \cite{MellorMelvin2003} allows for any system of Seifert surfaces, but we will restrict it to C-complexes.  This formulation involves only the clasps of that C-complex and so it is not surprising that it says something about the clasp number.  According to \cite[Theorem 5]{AADG20} $C(L)\ge 2\ceil{2\sqrt{|\mu_{123}(L)/3}|}$, where $\ceil{-}$ indicates the ceiling of a real number.   The current work builds on this, producing a better lower bound on the clasp number.

\begin{theorem}\label{thm: calc} Let \(L=L_1\cup L_2\cup L_3\) be a 3-component link with vanishing pairwise linking numbers. Then, \(C(L)\geq2\ceil{\sqrt{3|\mu_{123}(L)|}}\). 
\end{theorem}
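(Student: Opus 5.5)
The plan is to combine the Mellor--Melvin formula for $\mu_{123}$ in terms of a C-complex with an isoperimetric-type inequality for closed lattice paths (the ``word curves'' of the abstract).

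\emph{Setup.} Fix a C-complex $F=F_1\cup F_2\cup F_3$ for $L$. Let $c_{ij}$ denote the number of clasps between $F_i$ and $F_j$, so the total number of clasps is $c_{12}+c_{13}+c_{23}$. Traversing $L_i$ and recording each clasp it passes through, with a sign, produces a word $w_i$ in the letters $x_j^{\pm1},x_k^{\pm1}$ for $\{i,j,k\}=\{1,2,3\}$. This word contains exactly $c_{ij}$ letters $x_j^{\pm1}$ and $c_{ik}$ letters $x_k^{\pm1}$. The signed count of the $x_j$ letters is $\lk(L_i,L_j)=0$. Two consequences follow. First, each $c_{ij}$ is even. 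Second, reading $w_i$ as a lattice path in $\R^2$ ($x_j^{\pm1}$ a horizontal unit step, $x_k^{\pm1}$ a vertical one) gives a \emph{closed} curve $\gamma_i$. The Mellor--Melvin formula, restricted to C-complexes, expresses $\mu_{123}(L)$ up to sign as $e_{23}(w_1)+e_{31}(w_2)+e_{12}(w_3)$, where $e_{jk}(w)$ is a signed count of pairs in which an $x_j$ letter precedes an $x_k$ letter. Since $\sum_j \epsilon_j = 0$, this count equals (up to sign) the signed area $\oint y\,dx$ enclosed by $\gamma_i$. Checking this identification, including the sign conventions, is the first step.

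\emph{Area bound.} Next I will show that a closed lattice path with $a$ horizontal and $b$ vertical unit steps has $|\text{signed area}|\le ab/4$. Write the area as $\sum_s \epsilon_s y_s$, the sum running over horizontal steps $s$, where $y_s$ is the height of step $s$. Because $\sum_s\epsilon_s=0$, we may replace $y_s$ by $y_s-y_0$ for any constant $y_0$. A closed path with $b$ vertical steps has vertical range at most $b/2$. Choosing $y_0$ at the middle of that range gives $|y_s-y_0|\le b/4$, hence $|\text{area}|\le a\cdot b/4$. Applied to the three curves $\gamma_1,\gamma_2,\gamma_3$, this yields
\[
|\mu_{123}(L)|\le \tfrac14\left(c_{12}c_{13}+c_{12}c_{23}+c_{13}c_{23}\right).
\]

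\emph{Conclusion.} Put $C=c_{12}+c_{13}+c_{23}$. The elementary inequality $ab+bc+ca\le (a+b+c)^2/3$ gives $|\mu_{123}|\le C^2/12$, so $C\ge 2\sqrt{3|\mu_{123}|}$. Since every $c_{ij}$ is even, $C/2$ is an integer, and therefore $C/2\ge \ceil{\sqrt{3|\mu_{123}|}}$. This holds for every C-complex bounded by $L$, and taking the minimum over all of them proves the theorem.

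The main obstacle is the first step: setting up the Mellor--Melvin formula precisely in this C-complex language, with consistent signs, and justifying that $e_{jk}(w_i)$ really is the enclosed signed area of the word curve. That requires a careful treatment of the clasp orientations and of basepoint independence, which the vanishing linking numbers should supply. Once that identification is in place, the remaining estimates are routine.
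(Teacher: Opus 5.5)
Your proposal is correct and follows essentially the same route as the paper: identify $e_{jk}(w_i)$ with the signed area of the closed word curve (Theorem 7 of AADG20), bound each area by $\frac14(\text{horizontal length})(\text{vertical length})$, and finish with $ab+bc+ca\le (a+b+c)^2/3$. Your centering argument for the area bound is a slightly cleaner, two-sided version of the paper's shift-to-minimum argument, and your observation that each $c_{ij}$ is even is what actually justifies passing from $C\ge 2\sqrt{3|\mu_{123}|}$ to $C\ge 2\lceil\sqrt{3|\mu_{123}|}\rceil$ — a parity step the paper's closing ``division by 2'' leaves implicit.
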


As an immediate application, the link of Figure~\ref{fig: Sample computation} has  C-complex with 6 clasps, so $C(L)\le 6$.   It also has $\mu_{123}(L)=3$, so $C(L)\ge 6$.  In Section~\ref{background}, we recall the technique developed in~\cite{AADG20} to compute the triple linking number in terms of an area computation in the plane.  In Section~\ref{sect: lower bound} we prove Theorem~\ref{thm: calc} by analyzing the relationship between the area and perimeter of these regions.

  In contrast to \cite[Theorem 5]{AADG20} our bound is sharp.  

\begin{theorem}\label{thm:realization}
For any $M\in \Z$,  there exists a link L such that $\mu_{123}(L)=M$ and \(C(L)=2\ceil{\sqrt{3|M|}}.\)\end{theorem}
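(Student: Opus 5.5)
By Theorem~\ref{thm: calc}, every link $L$ with $\mu_{123}(L)=M$ already satisfies $C(L)\ge 2\ceil{\sqrt{3|M|}}$. So it is enough, for each $M$, to build one link with $\mu_{123}=M$ that bounds a C-complex with exactly $2\ceil{\sqrt{3|M|}}$ clasps. The case $M=0$ is handled by the unlink. Mirroring, or reversing the orientation of one component, changes the sign of $\mu_{123}$, so we may assume $M>0$. Set $n=\ceil{\sqrt{3M}}$; the goal is a C-complex with $2n$ clasps realizing $\mu_{123}=M$.

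The construction runs the area interpretation of Section~\ref{background} backwards. Recall the setup there. From a C-complex with vanishing pairwise linking numbers, one records the clasps met along each $L_i$ as a word in signed letters. These words give three closed lattice "word curves" in the plane. Each clasp between $L_i$ and $L_j$ contributes one unit step to the curve of $L_i$ and one to the curve of $L_j$. The triple linking number is then, up to sign convention, the signed area enclosed by these curves. I will take it as given, as in \cite{AADG20}, that any prescribed triple of words with letters correctly paired (each clasp appearing once in each of the two relevant words, with matching signs, and with zero total exponent for each pair) is realized by some C-complex. One builds this by taking three disks, adding the clasps in the prescribed cyclic orders along the boundaries, and using bands (which add genus but not clasps) to reconcile orderings.

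The combinatorial step is to choose words, with $2n$ letters in total, whose word curves enclose area exactly $M$. The proof of Theorem~\ref{thm: calc} shows where the extremal shape lives. Total perimeter $2n$, split among three curves, encloses at most $n^2/3$ area, and this is attained by the "hexagonal" configuration. Since $n^2\ge 3M$, maximal area is at least $M$. I then need two things:
\begin{enumerate}
\item an explicit family of words attaining the maximal area $\lfloor n^2/3\rfloor$, separated by residue classes of $n$ mod $3$, giving near-equal splits of the $2n$ clasps among the pairs $12,13,23$;
\item a way to decrease the area one unit at a time without changing the number of clasps. The natural move is to swap two adjacent letters in a word, which changes the area by $\pm1$ as a lattice-path corner flip.
\end{enumerate}
Repeated flips reach area exactly $M$ for every $M\le n^2/3$ with exactly $2n$ clasps. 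I also need $M>(n-1)^2/3$ for sharpness, but that is automatic from the definition of $n$.

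I expect the main obstacle to be the bookkeeping in the first item. The words must form valid paired words, meaning consistent clasp signs and zero pairwise linking, while the curves attain the discrete maximum area for each residue of $n$ mod $3$. I would first write down the $n\equiv 0$ case explicitly, as alternating blocks like $(x^{+})^{k}(y^{+})^{k}(x^{-})^{k}(y^{-})^{k}$ for each component. I would then adjust by one or two clasps for the other residues and check the area formula directly. Once the words exist, realizing them geometrically by a C-complex and verifying $\mu_{123}=M$ via the Section~\ref{background} formula is routine.
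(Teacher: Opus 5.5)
Your argument is correct, but it organizes the construction differently from the paper.

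\textbf{The paper's route.} It uses Lemma~\ref{lem:casewise} to split $M$ into three ranges. In each range it writes down one explicit word-curve triple: rectangles with sides $k$ or $k+1$, plus a notch of size $\ell_1$ (resp.\ $\ell_2$). It then reads off the claspwords \pref{eqn:combineCase} and draws the link (Figure~\ref{fig: Optimal example}).

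\textbf{Your route.} You fix $n=\ceil{\sqrt{3M}}$ and start from rectangles whose sides come from a near-equal split $a_{12}+a_{13}+a_{23}=n$. Their total area is $\floor{n^2/3}$, which is at least $M$ because $n^2\ge 3M$. You then swap an adjacent $x_i$ and $x_j$ in one word. Each swap changes $e_{ij}$ by exactly $\pm1$ and keeps every letter count, so the clasp number and vanishing linking numbers are preserved. Any permutation is reachable by adjacent swaps, so you can pass to a zero-area arrangement, and a discrete intermediate value argument hits $M$.

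\textbf{What each route buys.} Your version makes the casewise lemma unnecessary. The matching lower bound comes straight from Theorem~\ref{thm: calc}, so your remark that you also need $M>(n-1)^2/3$ is superfluous. It also explains the paper's notched rectangles: up to cyclic rotation of the basepoint, they are one particular sequence of your corner flips. The price is that you need realizability of an \emph{arbitrary} compatible triple of claspwords. The paper only needs realizability for its specific words, which it draws. That general realizability is true. Start with three disjoint disks in a plane. For each clasp, push a thin finger of $D_i$ out at the prescribed point of $\partial D_i$, route it above the plane at its own height, and hook it around $\partial D_j$ at the prescribed point; choose which side overlaps $D_j$ to get the prescribed sign. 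You should supply this argument rather than attribute it to \cite{AADG20}. Note also that your route proves existence only; it does not give explicit diagrams.

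\textbf{Two slips to fix.}
\begin{enumerate}
\item Mirroring does \emph{not} change the sign of $\mu_{123}$. It reverses every clasp sign but preserves the order of letters along each component. Each term of $e_{ij}(w_k)$ is a product $\epsilon_n\epsilon_m$ of two signs, so $\mu_{123}$ is unchanged. Only your other option, reversing the orientation of one component, produces $-\mu_{123}$; that is what the paper uses.
\item A C-complex with $2n$ clasps gives $4n$ letters, not $2n$. The quantity equal to $2n$ is $X_1+X_2+X_3=\tfrac12 L(\gamma)$, and that is what your area bound $n^2/3$ actually uses.
\end{enumerate}
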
 

The links we produce are explicit.  See Figure~\ref{fig: Optimal example}.  In Section~\ref{sect:examples} we motivate these examples and prove Theorem~\ref{thm:realization}.

\begin{figure}
     \centering
     \begin{subfigure}[t]{0.45\textwidth}
         \centering
         \begin{tikzpicture}
         \node at (0,.4){\includegraphics[width=\textwidth]{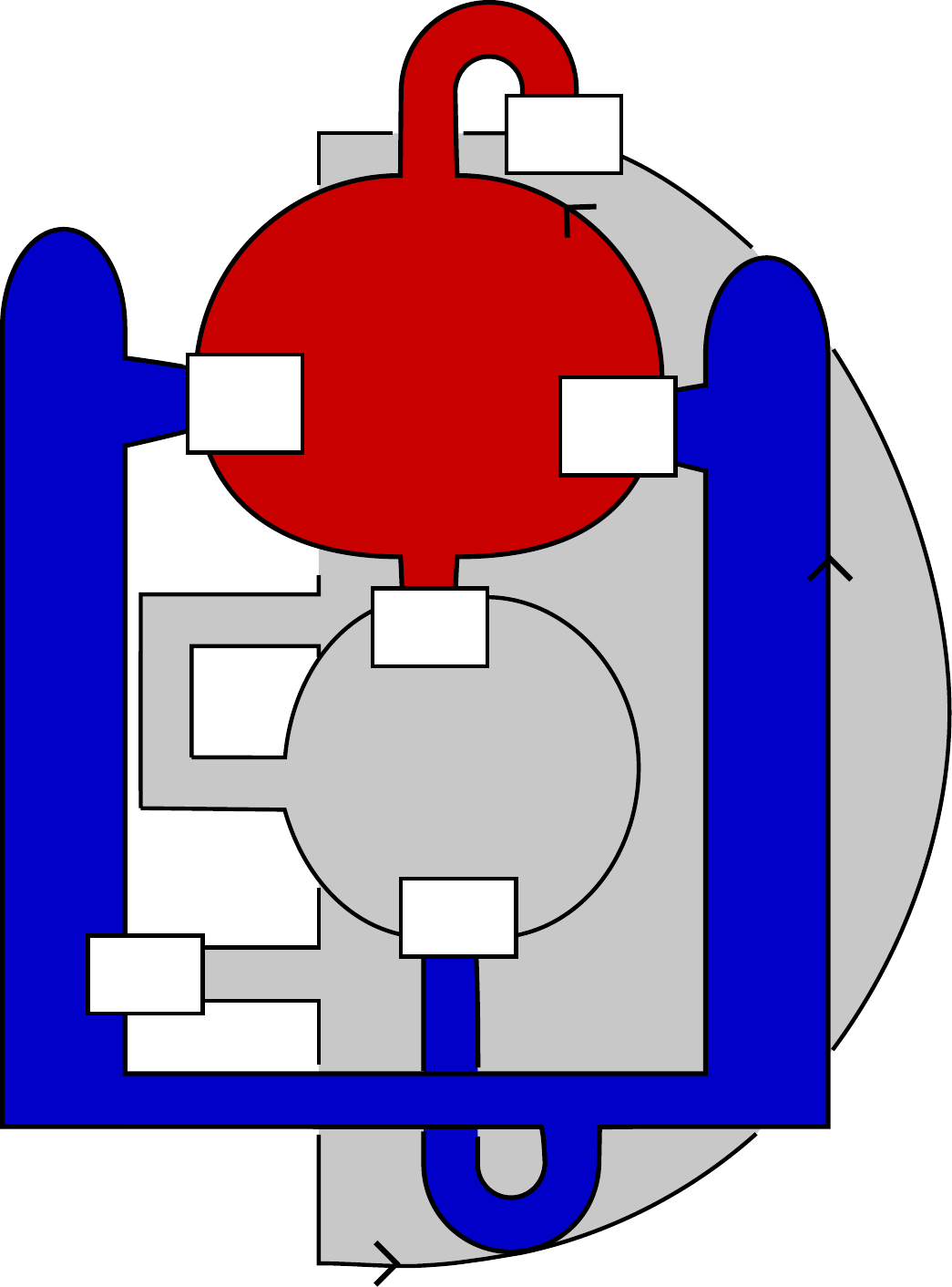}};
         \node at (.6,4.3) {$\beta$};
         \node at (1.1,2.1) {$\alpha$};
         \node at (-.35,.5) {$-\beta$};
         \node at (-1.85,2.2) {$-\alpha$};
         \node at (-.3,-1.8) {$\gamma$};
         \node at (-2.5,-2.2) {$-\gamma$};

        \node at (1,5){$L_1$};
        \node at (-1.5,-4){$L_3$};
        \node at (2.55,3.555){$L_2$};
         
         \end{tikzpicture}
         \caption{If $M=\alpha\beta+\beta\gamma+\gamma\alpha$ then the link above has $\mu_{123}(L)=M$ and $C(L) = 2\ceil{\sqrt{3M}}$.}
         \label{fig: SimplestCase}
     \end{subfigure}
     \hfill
     \begin{subfigure}[t]{0.5\textwidth}
         \begin{tikzpicture}
         \node at (0,0){\includegraphics[width=\textwidth]{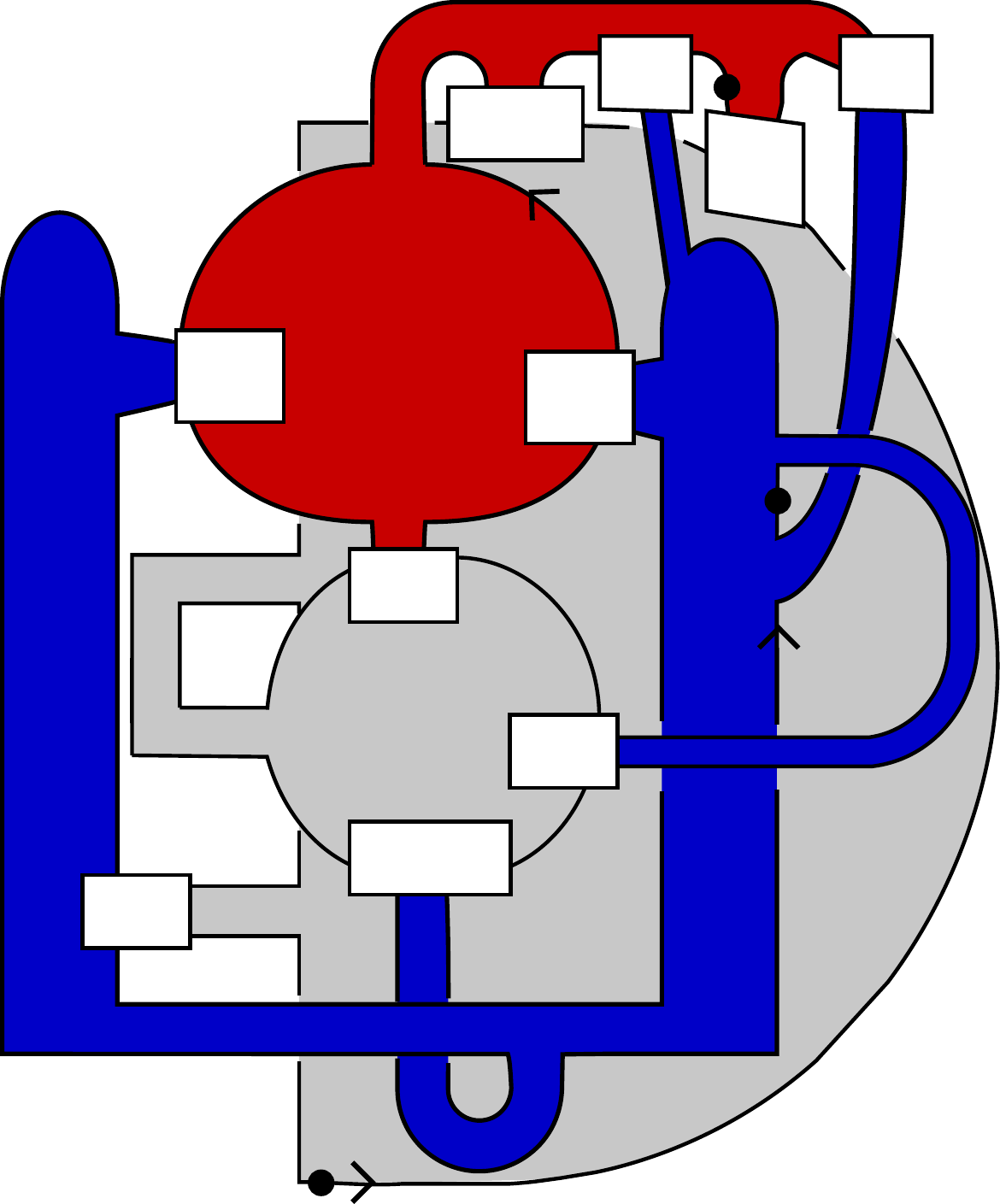}};
         \node at (.1,4) {$\beta-\ell_1$};
         \node at (1.2, 4.3) {$+1$};
         \node at (2.1, 3.6) {$\ell_1$};
         \node at (3.2, 4.3) {$-1$};
         \node at (.7,1.7) {$\alpha$};
         \node at (-.8,.1) {$-\beta$};
         \node at (-2.2,1.9) {$-\alpha$};
         \node at (-3,-2.6) {$-\gamma$};
         \node at (-.6,-2.2) {$\gamma-\ell_2$};
         \node at (.5,-1.2) {$\ell_2$};
         
        \node at (-1.3,4.5){$L_1$};
        \node at (-2,-4.3){$L_3$};
        \node at (-3.5,3.5){$L_2$};
         \end{tikzpicture}
         \caption{If $M=\alpha\beta+\beta\gamma+\gamma\alpha+\ell_1+\ell_2$ with $1\le \ell_1< \beta$ and $0\le \ell_2<\gamma$ then the link above has $\mu_{123}(L)=M$ and $C(L) = 2\ceil{\sqrt{3M}}$.}
         \label{fig: First Case}
     \end{subfigure}

        \caption{Links satisfying $C(L) = 2\ceil{\sqrt{3|\mu_{123}(L)|}}$. Here, $\mu_{123}(L)\in \N$, and  $(\alpha, \beta,\gamma)\in \{(k,k,k), (k,k,k+1), (k,k+1,k+1)\}$ with $k\in \N\cup\{0\}$.  A box containing a positive (resp. negative) integer should be interpreted as containing that many positive (resp. negative) clasps.  
        }
        \label{fig: Optimal example}
\end{figure}

\section{Background C-complexes and the triple linking number}\label{background}

An $n$-component link $L=L_1\cup\dots\cup L_n$ is an isotopy class of ordered oriented disjoint unions of smoothly embedded circles in the 3-sphere, $S^3$.  Throughout this paper all links will be 3-component links, except when we explicitly state otherwise.

\begin{definition}[See Section 2.1 of \cite{CimFlo2008}]\label{defn:C-cplx}
Let $L=L_1\cup L_2\cup L_3$ be a 3-component link.  A C-complex for $L$ is a union of three smoothly embedded oriented compact surfaces $F=F_1\cup F_2\cup F_3$ which intersect transversely and satisfy the following conditions:
\begin{enumerate}
\item For each $i$, $\bdry F_i=L_i$ and $F_i$ has no closed components.
\item For each $i\neq j$ $F_i\cap F_j$ consists of a (possibly empty) union of disjoint embedded arcs each with one endpoint in $L_i$ and one in $L_j$.  We call these clasps, see Figure~\ref{fig: clasps}.
\item $F_1\cap F_2\cap F_3=\emptyset$.
\end{enumerate}
\end{definition}

\begin{figure}
     \centering
     \begin{subfigure}[b]{0.25\textwidth}
         \centering
         \begin{tikzpicture}
        \node at (0,0){\includegraphics[width=.75\textwidth]{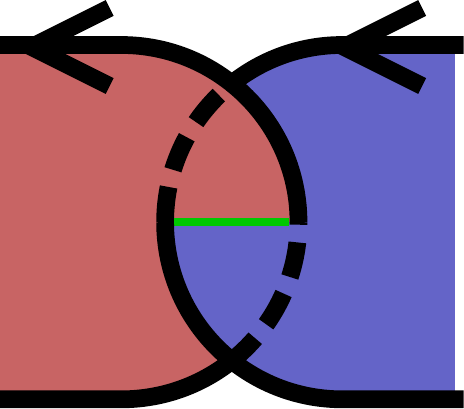}};
         \end{tikzpicture}
         \caption{A positive clasp}
         \label{fig: positive clasp}
     \end{subfigure}
     \begin{subfigure}[b]{0.25\textwidth}
         \centering
         \begin{tikzpicture}
        \node at (0,0){\includegraphics[width=.75\textwidth]{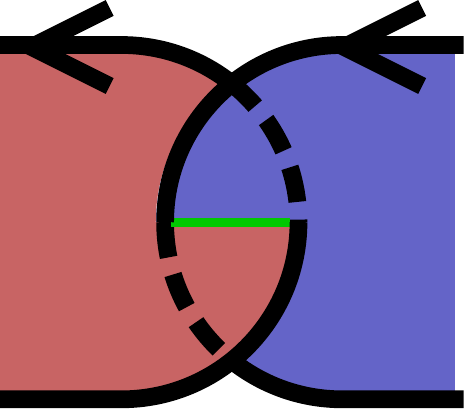}};
         \end{tikzpicture}
         \caption{A negative clasp}
         \label{fig: negative clasp}
     \end{subfigure}

        \caption{}
        \label{fig: clasps}
\end{figure}

The \textbf{clasp number} of $L$, denoted $C(L)$, is defined to be the minimal number of clasps amongst all C-complexes bounded by $L$.  

Let $i\neq j$ and $F$ be a C-complex for $L$. A point in the intersection of $L_i$ and $F_j$ is said to be positive if the positive tangent direction to $L_i$ agrees with the positive normal direction to $F_j$.  Otherwise this point of intersection is negative.   For any clasp $c\subseteq F_i\cap F_j$ in a C-complex $F$, we say that $c$ is a positive (or negative) clasp if the intersections points in $L_i\cap F_j$ (or $F_i\cap L_j$) is positive (or negative).  
The linking number, $\lk(L_i,L_j)$, is given by the signed count of clasps in $F_i\cap F_j$ and so $|\lk(L_1, L_2)|+|\lk(L_1, L_3)|+|\lk(L_2, L_3)|\le C(L)$.  Noteworthily, for any 2-component link, $C(L_1\cup L_2) = |\lk(L_1,L_2)|$, as long as $\lk(L_1,L_2)\neq 0$  \cite{AADG20}.  For 3-component links, there is another obstruction.  Mellor-Melvin \cite{MellorMelvin2003} shows how to compute the triple linking number, $\mu_{123}(L)$, in terms of the clasps of a C-complex.  We recall this formula below along with an illustrative example.  See also \cite[Section 3.1]{DaRo2017}.

  Let $L$ be a 3-component link and $F$ be a C-complex for $L$.  We begin by defining words called \textbf{clasp-words} which record the order that each component of $L$ encounters the component of $F$.  Fix a base-point $p_1$ on $L_1$, and define a word $w_1(F)$ in $x_2$, $x_2^{-1}$, $x_3$, and $x_3^{-1}$ as follows.  Start at $p_1$ and follow $L_1$ in the positive direction.  Each time $L_1$ intersects $F_2$ (or $F_3$) with sign $\epsilon = \pm1$, record $x_2^\epsilon$ (or $x_3^\epsilon$).  Pick basepoints $p_2$ and $p_3$ on $L_2$ and $L_3$ and define $w_2(F)$ and $w_3(F)$ analogously.  For the sake of example, given the C-complex $F$ and base-points in Figure~\ref{fig: Sample computation},  
\begin{equation}\label{eqn:claspwords}
w_1=x_3x_2^{-1}x_3^{-1}x_2,~
w_2=x_1x_3^{-1}x_1^{-1}x_3,~
w_3=x_1x_2x_1^{-1}x_2^{-1}.
\end{equation}

For any permutation $i,j,k$ of $1,2,3$ define $e_{ij}(w_k)$  by counting with sign how many instances of $x_i$ appear before an instance of $x_j$ in $w_k$.  More precisely, if $w_k = \prod_{n=1}^N x_{a_n}^{\epsilon_n}$ with each $a_n=i$ or $j$ and each $\epsilon_n=\pm1$ then 
$$
e_{ij}(w_k) = \Sum_{1\le n<m\le N}^N \delta(a_n,i)\delta(a_m,j) \epsilon_m\epsilon_n.
$$
Here $\delta(x,y) = \begin{cases}1&x=y\\0&x\neq y\end{cases}$ is the Kroeneker delta function.  We invite the reader to use the clasp-words of \pref{eqn:claspwords} in order to verify the computation of  $e_{12}(w_3)$, $e_{23}(w_1)$, and $e_{31}(w_2)$ below:
$$\begin{array}{rcl}
e_{23}(w_1)=+1\hspace{5mm} 
e_{31}(w_2)=+1\hspace{5mm} 
e_{12}(w_3)=+1.\hspace{5mm}
\end{array}
$$
Finally,  $\mu_{123}(L) = e_{12}(w_3)+e_{23}(w_1)+e_{31}(w_2)$. Thus, for the link of Figure~\ref{fig: Sample computation} $\mu_{123}(L)=3$.  

When any linking number fails to vanish, $\mu_{123}(L)\in \Z$ fails to be well defined.  One instead writes $\overline{\mu}_{123}(L)\in \sfrac{\Z}{(\operatorname{GCD}(\lk(L_1,L_2),\lk(L_1,L_3),\lk(L_1,L_3)))}$.  Since all links considered in this paper will have vanishing linking numbers, this ambiguity will not be relevant to us. 

The key computational result of \cite{AADG20} is that these very combinatorial constructions can be interpreted geometrically.  Let $w$ be a word in $x_i$, $x_i^{-1}$, $x_j$ and $x_j^{-1}$ with $(i,j)=(1,2)$, $(2,3)$, or $(3,1)$.  The \textbf{word-curve} $\gamma_{ij}(w)$ is the piecewise linear curve in $\R^2$ defined as follows.  Start at $(0,0)$; when you see an $x_i$ in $w$ travel one to the right, when you see an $x_i^{-1}$, one to the left, $x_j$ one up and $x_j^{-1}$ one down.  Using the words of \pref{eqn:claspwords} we get word-curves as in Figure~\ref{fig: word curves}.  The astute reader will notice that the (signed) areas of the regions they enclose agree with $e_{12}(w_3)$, $e_{23}(w_1)$ and $e_{31}(w_2)$.  According to \cite{AADG20} this always happens.
\begin{theorem}[Theorem 7 of \cite{AADG20}]\label{AADG theorem}
If $w$ is a word in $x_i$ and $x_j$, then $e_{ij}(w) = \displaystyle\oint_{\gamma_{ij}(w)} x~dy$.  If $\gamma_{ij}(w)$ is a simple closed curve with the counterclockwise orientation, then this is the area of region that $\gamma_{ij}(w)$ bounds.  
\end{theorem}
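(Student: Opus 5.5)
The statement to prove is Theorem~\ref{AADG theorem}: $e_{ij}(w)=\oint_{\gamma_{ij}(w)}x\,dy$. I will prove it by induction on the length of $w$, tracking the contribution of each letter, since both sides accumulate letter by letter. Write $w=\prod_{n=1}^N x_{a_n}^{\epsilon_n}$. The word-curve $\gamma=\gamma_{ij}(w)$ is the concatenation of unit segments $s_1,\dots,s_N$. Segment $s_n$ is horizontal if $a_n=i$ and vertical if $a_n=j$.

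\textbf{Step 1: horizontal segments contribute nothing.} On a horizontal segment $dy=0$, so $\int_{s_n}x\,dy=0$ whenever $a_n=i$.

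\textbf{Step 2: vertical segments.} Suppose $a_m=j$. Then $s_m$ lies on the vertical line $x=X_{m-1}$, where $X_{m-1}=\sum_{n<m}\delta(a_n,i)\epsilon_n$ is the $x$-coordinate of the curve after the first $m-1$ letters. This holds because only $x_i^{\pm1}$ letters change $x$, each by $\epsilon_n$. The segment is traversed in the direction $\epsilon_m$, so
$$\int_{s_m}x\,dy=\epsilon_m X_{m-1}=\sum_{n<m}\delta(a_n,i)\delta(a_m,j)\epsilon_n\epsilon_m .$$

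\textbf{Step 3: sum over $m$.} Summing over all $m$ gives exactly the double sum defining $e_{ij}(w)$. This proves the identity $e_{ij}(w)=\oint_{\gamma_{ij}(w)}x\,dy$.

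This is really a direct computation rather than an induction. The integral notation $\oint$ is meaningful for closed curves, which is the relevant case, since there the exponent sums of $x_i$ and $x_j$ vanish. However, the computation above is valid for any path.

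\textbf{Step 4: the area statement.} Suppose $\gamma$ is simple and closed with counterclockwise orientation. Green's theorem applied to the field $(P,Q)=(0,x)$ gives
$$\oint_\gamma x\,dy=\iint_R\left(\frac{\partial Q}{\partial x}-\frac{\partial P}{\partial y}\right)dA=\iint_R 1\,dA,$$
which is the area of the enclosed region $R$. Green's theorem applies to piecewise linear simple closed curves, since these bound polygonal regions.

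The only step needing care is Step 2: the $x$-coordinate must be the one before the vertical step, which matches the strict inequality $n<m$ in the definition of $e_{ij}$. Nothing here seems to be a real obstacle.
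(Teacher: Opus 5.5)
Your argument is correct. Horizontal segments contribute nothing. The vertical segment $s_m$ sits at abscissa $X_{m-1}=\sum_{n<m}\delta(a_n,i)\epsilon_n$ and contributes $\epsilon_m X_{m-1}$, so summing over $m$ reproduces the defining double sum for $e_{ij}(w)$, and Green's theorem handles the area clause.

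The paper does not prove this statement itself; it quotes it from \cite{AADG20}. However, your segment-by-segment evaluation of $\oint x\,dy$ is the same decomposition the paper uses to prove its bound $\left|\oint_\gamma x\,dy\right|\le\frac14\oint_\gamma|dx|\cdot\oint_\gamma|dy|$, so your approach is in line with the paper's methods.
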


\begin{figure}
\centering
     \begin{tikzpicture}
     \draw[thin] (-1.5,0)--(1.5,0); 
     \draw[thin] (0,-1.5)--(0,1.5);
     \draw (0,0)--(0,1)--(-1,1)--(-1,0)--(0,0);
     \draw[->](0,0)--(0,.5);
     \node at (0,-2) {$\gamma_{23}(x_3x_2^{-1}x_3^{-1}x_2)$};
     \end{tikzpicture}
     \hspace{.1\textwidth}
     \begin{tikzpicture}
     \draw[thin] (-1.5,0)--(1.5,0); 
     \draw[thin] (0,-1.5)--(0,1.5);
     \draw (0,0)--(0,1)--(-1,1)--(-1,0)--(0,0);
     \draw[->](0,0)--(0,.5);
     \node at (0,-2) {$\gamma_{31}(x_1x_3^{-1}x_1^{-1}x_3)$};
     \end{tikzpicture}
     \hspace{.1\textwidth}
     \begin{tikzpicture}
     \draw[thin] (-1.5,0)--(1.5,0); 
     \draw[thin] (0,-1.5)--(0,1.5);
     \draw (0,0)--(1,0)--(1,1)--(0,1)--(0,0);
     \draw[->](0,0)--(.5,0);
     \node at (0,-2) {$\gamma_{12}(x_1x_2x_1^{-1}x_2^{-1})$};
     \end{tikzpicture}
     
        \caption{The word-curves associated with the $C$-complex and basepoints of Figure~\ref{fig: Sample computation}.}
        \label{fig: word curves}
\end{figure}

\section{A better bound on the clasp number of a link} \label{sect: lower bound}

The bound appearing in \cite[Theorem 5]{AADG20} comes from noticing that the region bounded by the word-curve $\gamma_{ij}(w_k)$ is a polyomino and that work of Harary-Harborth \cite{HH76} gives a sharp lower bound on the perimeter of a polyomino in terms of its area.  Our improvement comes from thinking about all three word curves $\gamma_{12}(w_3)$, $\gamma_{23}(w_1)$, and $\gamma_{31}(w_2)$ at once.  The words $w_1$, $w_2$ and $w_3$ are related.  For instance, the number of $x_2^{+1}$'s in $w_1$ is equal to the number of $x_1^{+1}$'s in $w_2$, as both of these count the number of positive clasps in $F_1\cap F_2$.   We encode the details of this relationship in the following definition.

\begin{definition}\label{defn: word curve triple}
A closed curve $\gamma$ in $\R^2$ is called a \textbf{word-curve} if it consists of horizontal and vertical line segments with corners only at integer lattice points.  For word curves $\gamma_1, \gamma_2, \gamma_3$ we call $(\gamma_1, \gamma_2, \gamma_3)$ a \textbf{word-curve triple} if 
\begin{center}
$\displaystyle\oint_{\gamma_{1}}|dx| = \oint_{\gamma_{2}}|dy|$, $\displaystyle\oint_{\gamma_{2}}|dx|=\oint_{\gamma_3}|dy|$, and $\displaystyle\oint_{\gamma_{3}}|dx|=\oint_{\gamma_1}|dy|$
\end{center}
\end{definition}
The connection with claspwords is that if $F$ is a C-complex, then $\displaystyle\oint_{\gamma_{12}(w_3)}|dx|$ records (without sign) how any $x_1$'s appear in $w_3$ and $\displaystyle\oint_{\gamma_{23}(w_1)}|dy|$ records how any $x_1$'s appear in $w_3$.  But these two each count the number of clasps in $F_1\cap F_3$. The following proposition encodes this fact.

\begin{proposition}
For any 3-component link $L$ with vanishing pairwise linking numbers and any C-complex $F$, $(\gamma_{12}(w_3), \gamma_{23}(w_1), \gamma_{31}(w_2))$ is a word-curve triple. 
\end{proposition}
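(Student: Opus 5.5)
We argue directly from Definition~\ref{defn: word curve triple}. Two things must be checked: each of $\gamma_{12}(w_3)$, $\gamma_{23}(w_1)$, $\gamma_{31}(w_2)$ is a closed lattice word-curve, and the three equalities of total horizontal and vertical lengths hold.

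First we show each curve is a word-curve. By construction, $\gamma_{ij}(w_k)$ is a concatenation of unit horizontal and vertical steps starting at $(0,0)$, so its corners lie at integer lattice points. It is closed because its endpoint is $(\text{signed count of } x_i \text{ in } w_k,\ \text{signed count of } x_j \text{ in } w_k)$. The signed count of $x_i$ in $w_k$ is the algebraic intersection of $L_k$ with $F_i$. Each clasp in $F_i\cap F_k$ contributes exactly one point of $L_k\cap F_i$, and its sign is the sign of the clasp. Hence this count is the signed number of clasps in $F_i\cap F_k$, which is $\lk(L_i,L_k)=0$ by hypothesis. The same holds for $x_j$, so the curve returns to the origin.

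Next we establish the length equalities. Each letter $x_i^{\pm1}$ of $w_k$ contributes a unit horizontal step to $\gamma_{ij}(w_k)$ when $x_i$ is the first variable, and a unit vertical step when $x_i$ is the second variable. Therefore $\oint|dx|$ along $\gamma_{12}(w_3)$ is the unsigned number of letters $x_1^{\pm1}$ in $w_3$, which equals $\#(L_3\cap F_1)$. Similarly, $\oint|dy|$ along $\gamma_{23}(w_1)$ is the number of letters $x_3^{\pm1}$ in $w_1$, which equals $\#(L_1\cap F_3)$. Both quantities equal the number of clasps in $F_1\cap F_3$. The reason is that a clasp arc in $F_1\cap F_3$ has one endpoint on $L_1$ (a point of $L_1\cap F_3$) and one on $L_3$ (a point of $L_3\cap F_1$). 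Conversely, every point of $L_1\cap F_3$ lies in $F_1\cap F_3$ and so is an endpoint of exactly one clasp. The other two equalities follow by cyclically permuting indices: the $x_2$-letters in $w_1$ and the $x_1$-letters in $w_2$ both count clasps in $F_1\cap F_2$, and the $x_3$-letters in $w_2$ and the $x_2$-letters in $w_3$ both count clasps in $F_2\cap F_3$.

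The main point needing care is the bijection between clasps and boundary intersection points. By condition (2) of Definition~\ref{defn:C-cplx}, $F_i\cap F_j$ is a disjoint union of arcs, each with exactly one endpoint on $L_i$ and one on $L_j$. We also need that $L_k\cap F_i$ contains no interior points of $F_i$ other than clasp endpoints. This holds because $L_k\subset F_k$, so $L_k\cap F_i\subseteq F_k\cap F_i$, and points of a clasp arc lying on $L_k=\partial F_k$ are exactly its endpoints on $L_k$. With this checked, both the closure and the length equalities follow.
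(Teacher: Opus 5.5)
Your proof is correct and follows the paper's reasoning: the length equalities hold because each clasp in $F_i\cap F_j$ contributes exactly one letter to each of $w_i$ and $w_j$. For example, the $x_1$-letters of $w_3$ and the $x_3$-letters of $w_1$ both count the clasps in $F_1\cap F_3$. You also make explicit two points the paper leaves implicit: the vanishing pairwise linking numbers are what make each $\gamma_{ij}(w_k)$ a closed curve, and transversality ensures $L_k\cap F_i$ consists only of clasp endpoints.
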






It should not come as a surprise that if one wants to maximize the area enclosed in a curve while fixing its horizontal and vertical length, then a rectangle should be optimal.  The following makes this precise.

\begin{proposition}\label{prop: 
Relate area with x- and-y length}
Let $\gamma$ be word curve, 
then 
 $\left\lvert\displaystyle\oint_{\gamma} x\, dy\right\rvert\leq\frac{1}{4}\Oint_{\gamma}|dx|\cdot \Oint_{\gamma}|dy|$.
\end{proposition}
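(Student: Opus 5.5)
We bound the signed area by a winding-number integral and then use that the curve is confined to a box. Let $H=\oint_\gamma|dx|$ and $V=\oint_\gamma|dy|$. Since $\gamma$ is a closed lattice curve, $\oint_\gamma dx=0$, so its rightward steps total $H/2$ and its leftward steps total $H/2$; likewise its up and down steps each total $V/2$. Hence the projection of $\gamma$ to the $x$-axis is an interval of length at most $H/2$, and its projection to the $y$-axis has length at most $V/2$. So $\gamma$ lies in a closed rectangle $R=[a,a+H/2]\times[b,b+V/2]$.

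Next we express $\oint_\gamma x\,dy$ through the winding number. By Green's theorem applied to the piecewise linear closed curve,
$$\oint_\gamma x\,dy=\int_{\R^2}\operatorname{wind}(\gamma,p)\,dA(p).$$
If one prefers to avoid general Green's theorem, decompose by unit squares: the signed area counts each lattice square with multiplicity equal to the winding number of $\gamma$ about its center. This follows by induction on the steps, or by noting that both sides are additive under splitting $\gamma$ at a self-intersection into closed subloops. The winding number vanishes outside $R$. So it remains to bound $|\operatorname{wind}(\gamma,p)|$ for $p$ inside $R$.

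This is the main obstacle: a naive bound gives winding number larger than $1$, which destroys the constant $\frac14$. We need a sharper estimate that trades off against the number of crossings. For a point $p=(p_x,p_y)$ not on $\gamma$, the winding number equals the signed number of crossings of the horizontal ray from $p$ by vertical edges of $\gamma$. Integrating over all $p$ in a horizontal line $y=p_y$ gives
$$\int|\operatorname{wind}(\gamma,(t,p_y))|\,dt\le \int \frac{n(p_y)}{2}\,dt,$$
where $n(y)$ is the number of vertical edges crossing height $y$. The winding number along the line is a step function whose jumps are $\pm1$ at these crossings, it vanishes outside an interval of length at most $H/2$, and it returns to $0$ at both ends.

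Cruder bounds fail here. Bounding $|\operatorname{wind}|\le n(y)/2$ and integrating over the width $H/2$ gives
$$\int \frac{n(y)}{2}\cdot\frac{H}{2}\,dy=\frac{H}{2}\cdot\frac{V}{2}=\frac{HV}{4},$$
since $\int n(y)\,dy=V$. This is exactly the desired bound.

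So the plan is the following. Confine the horizontal extent of $\gamma$ to width $\le H/2$. Note that on each horizontal line $|\operatorname{wind}|\le n(y)/2$: going from the far left, where the winding number is $0$, across $n(y)$ jumps of size $\pm1$ back to $0$, no value can exceed half the number of jumps in absolute value. Integrate first in $x$ over a width of at most $H/2$, then in $y$, using $\int n(y)\,dy=V$. This yields
$$\left|\oint_\gamma x\,dy\right|\le\frac14\oint_\gamma|dx|\cdot\oint_\gamma|dy|.$$
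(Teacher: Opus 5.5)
Your proof is correct, but it takes a different route from the paper's.

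\textbf{The paper's route.} The paper never passes to area or winding numbers. After translating so that $\min x=0$, it writes $\oint_\gamma x\,dy=\sum_{\text{up}}x(\sigma)\ell(\sigma)-\sum_{\text{down}}x(\delta)\ell(\delta)$ and discards the nonpositive downward contribution. It then bounds each $x(\sigma)$ by $\max x\le\frac12\Oint_\gamma|dx|$, and uses that the upward segments have total length $\frac12\Oint_\gamma|dy|$.

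\textbf{Your route.} You rewrite the integral as $\int\operatorname{wind}(\gamma,p)\,dA$. You bound the winding number on each horizontal line by $n(y)/2$, using the ``jumps of $\pm1$ starting and ending at $0$'' argument. You then integrate over a width of at most $H/2$ and use $\int n(y)\,dy=V$.

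\textbf{How they relate.} Slice by slice the two estimates are the same. At a generic height $y$, your slice $\int \operatorname{wind}(\gamma,(t,y))\,dt$ equals $\sum_{\text{up}}c_i-\sum_{\text{down}}c_j$. Here the $c$'s are the $x$-coordinates of the $n(y)/2$ upward and $n(y)/2$ downward vertical edges crossing height $y$. The paper in effect bounds this by $\frac{n(y)}{2}\max x$ and you by $\frac{n(y)}{2}\cdot(\text{width})$, which is the same number once $\min x=0$.

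\textbf{What each approach buys.} The paper's version is more elementary. It needs neither Green's theorem for non-simple curves nor the ray-crossing description of the winding number. You only sketch the former, via ``induction on the steps'' or splitting at self-intersections; its cleanest justification is exactly the slice-wise identity above. Your version gives the absolute value in one stroke, via $\bigl|\int w\bigr|\le\int|w|$. The paper's written argument only proves the upper bound $\oint_\gamma x\,dy\le\frac14\Oint_\gamma|dx|\cdot\Oint_\gamma|dy|$. It leaves the lower bound to the observation that reversing orientation negates the integral.

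\textbf{Presentation.} Your text calls the $n(y)/2$ estimate a ``cruder bound'' that ``fails'', and then correctly shows it gives exactly $HV/4$. That framing should be removed. Also, the displayed integral $\int\frac{n(p_y)}{2}\,dt$ should be taken only over the interval of length at most $H/2$ where the winding number can be nonzero.
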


\begin{proof}


Since all of the relevant integrals are preserved by shifting $\gamma$ in the plane, we may arrange that the minimum $x$-value on $\gamma$ is $0$.  Next, we note that one can compute $\Oint x\, dy$ by computing it over each of its constituent line segments, 
$$
\Oint_{\gamma} x\, dy = \Sum_{\sigma\in V_+}\oint_{\sigma} x\, dy+\Sum_{\delta\in V_-}\oint_{\delta} x\, dy+\Sum_{h\in H}\oint_{h} x\, dy
$$
where $V_+$, $V_-$ and $H$ are the sets of all upward, downward and horizontally facing line segments in $\gamma$, respectively.  Over each horizontal segment, $dy=0$, so $\Sum_{h\in H}\oint_{h} x\, dy=0$.  Over each   vertical line segment, $\sigma$ or $\delta$, $x = x(\sigma)$ or $x(\delta)$ is constant and so for each upward line segment $\Oint_{\sigma} x\, dy = x(\sigma)\cdot \ell(\sigma)$, where $\ell(\sigma)$ is the length of $\sigma$.  Similarly for each downward pointing segment, $\delta$, $\Oint_{\delta} x\, dy = -x(\delta)\cdot \ell(\delta)$.  Thus
$$
    \oint_\gamma x\, dy = \sum_{\sigma\in V_+} x(\sigma)\ell(\sigma) - \sum_{\delta\in V_-} x(\delta)\ell(\delta) \leq \sum _{\sigma\in V_+}x(\sigma)\ell(\sigma) $$
where the inequality comes from noticing that $\Sum_{\delta\in V_-} x(\delta)\ell(\delta)$ is non-negative.  Each $x(\sigma)$ is less than or equal to $\max (x)$, the maximum $x$-value on $\gamma$, thus,
$$
    \oint_\gamma x\, dy \leq \max( x)\cdot\sum_{\sigma\in V_+}\ell(\sigma). $$
    As $\gamma$ is a closed curve, $\Sum_{\sigma\in V_+}\ell(\sigma) = \Sum_{\delta\in V_-}\ell(\delta)$ where the latter sum is over all downward pointing line segments.  Moreover $\Sum_{\sigma\in V_+}\ell(\sigma) + \Sum_{\delta\in V_-}\ell(\delta) = \Oint_{\gamma} |dy|$.  Thus, $\Sum_{\sigma\in V_+}\ell(\sigma) = \frac{1}{2}\Oint_{\gamma} |dy|$ and 
    $$
    \oint x\, dy \leq \frac{1}{2} \max(x) \cdot \Oint_{\gamma} |dy|. $$
    Finally, $\max (x)=\Oint_{\gamma_0} dx$ where $\gamma_0$ is an arc of $\gamma$ starting at an $x$-value of $0$ and ending at $x_{\max}$.  Next, $\Oint_{\gamma_0} dx\le \sum_{\epsilon\in H_+}\ell(\epsilon)$ where $H_+$ is the set of rightward pointing line segments in $\gamma$.  For the same reason that $\Sum_{\sigma\in V_+}\ell(\sigma) = \frac{1}{2}\Oint_{\gamma} |dy|$, we see that  $\Sum_{\epsilon\in H_+}\ell(\epsilon) = \frac{1}{2}\oint_\gamma|dx|$.  Thus,
    $$
    \oint x\, dy \leq \frac{1}{4}\Oint_{\gamma}|dx|\cdot \Oint_{\gamma}|dy|. $$
\end{proof}

Our next result is an inequality relating the signed area bounded by a word curve triple $\gamma$ (as in Theorem~\ref{AADG theorem}) and the total length of $\gamma$ For a word curve, $\gamma = (\gamma_1, \gamma_2, \gamma_3)$, we will call $M(\gamma) = \Oint_{\gamma_1}xdy+\Oint_{\gamma_2}xdy+\Oint_{\gamma_3}xdy$ the signed area bounded by $\gamma$ and $L(\gamma) = \Oint_{\gamma_1}(|dx|+|dy|)+\Oint_{\gamma_2}(|dx|+|dy|)+\Oint_{\gamma_3}(|dx|+|dy|)$ its length.   

\begin{lemma}\label{lemma: do it for rectangles}
Let $\gamma$ be a  word-curve triple.  Then $L(\gamma)\ge 4\sqrt{3|M(\gamma)|}$.
\end{lemma}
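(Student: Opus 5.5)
Set $a=\Oint_{\gamma_1}|dx|=\Oint_{\gamma_2}|dy|$, $b=\Oint_{\gamma_2}|dx|=\Oint_{\gamma_3}|dy|$, and $c=\Oint_{\gamma_3}|dx|=\Oint_{\gamma_1}|dy|$. These identifications are exactly the defining conditions of a word-curve triple (Definition~\ref{defn: word curve triple}). In this notation the length is
$$L(\gamma)=(a+c)+(b+a)+(c+b)=2(a+b+c).$$
The plan is to bound $|M(\gamma)|$ in terms of $a,b,c$ using Proposition~\ref{prop: Relate area with x- and-y length}, and then to optimize with an elementary symmetric-function inequality.

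First, apply the triangle inequality and then Proposition~\ref{prop: Relate area with x- and-y length} to each curve separately:
$$|M(\gamma)|\le \sum_{i=1}^3\left|\Oint_{\gamma_i}x\,dy\right|\le \frac14\left(ac+ba+cb\right).$$
Note that $\gamma_1$ has horizontal length $a$ and vertical length $c$, $\gamma_2$ has $b$ and $a$, and $\gamma_3$ has $c$ and $b$. So the three products are exactly the pairwise products $ab$, $bc$, $ca$.

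Second, use the standard inequality $ab+bc+ca\le \frac13(a+b+c)^2$. It is equivalent to
$$\tfrac12\left((a-b)^2+(b-c)^2+(c-a)^2\right)\ge 0.$$
Combining gives
$$|M(\gamma)|\le \frac{1}{12}(a+b+c)^2=\frac{1}{12}\cdot\frac{L(\gamma)^2}{4}=\frac{L(\gamma)^2}{48}.$$
Hence $L(\gamma)^2\ge 48|M(\gamma)|$, and taking square roots (all quantities are nonnegative) yields $L(\gamma)\ge \sqrt{48|M(\gamma)|}=4\sqrt{3|M(\gamma)|}$.

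There is no real obstacle in this argument. The only point needing care is bookkeeping: one must check that the cyclic pattern of shared lengths in the definition of a word-curve triple makes each of $a,b,c$ appear in two curves. That is what makes the total length $2(a+b+c)$ and turns the area bound into the symmetric sum $ab+bc+ca$ rather than something unbalanced. The earlier proposition is stated with an absolute value, so signs cause no trouble once the triangle inequality is applied.
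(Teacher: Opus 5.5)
Your proof is correct and matches the paper's argument step for step. It uses the same identification $L(\gamma)=2(a+b+c)$, then applies the area--length proposition (bounding $\left|\oint x\,dy\right|$ by a quarter of the product of horizontal and vertical lengths) to each curve to get $|M(\gamma)|\le\frac14(ab+bc+ca)$, and finally uses $ab+bc+ca\le\frac13(a+b+c)^2$. The only cosmetic differences are that you make the triangle inequality explicit and prove the last inequality via the sum-of-squares identity, where the paper routes it through Cauchy--Schwarz.
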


\begin{proof}
Let $\gamma=(\gamma_1, \gamma_2, \gamma_3)$ be a word curve triple, $X_i=\Oint_{\gamma_i} |dx|$ and $Y_i=\Oint_{\gamma_i} |dy|$.  Since $\gamma$ is a word curve triple, $Y_2=X_1$, $Y_3=X_2$ and $Y_1=X_3$.  Thus, by Proposition~\ref{prop: 
Relate area with x- and-y length}, 
$$
|M(\gamma)|\le\frac{1}{4}\left(X_1Y_1+X_2Y_2+X_3Y_3\right)=\frac{1}{4}\left(X_1X_3+X_2X_1+X_3X_2\right).
$$
 Appealing to the Cauchy-Schwartz inequality, 
$$
X_1X_3+X_2X_1+X_3X_2 \le \sqrt{(X_1^2+X_2^2+X_3^2)(X_3^2+X_1^2+X_2^2)} = X_1^2+X_2^2+X_3^2.
$$
Thus,
\begin{eqnarray*}X_1X_3+X_2X_1+X_3X_2 &= &\frac{1}{3}(X_1X_3+X_2X_1+X_3X_2)+\frac{2}{3}(X_1X_3+X_2X_1+X_3X_2)
\\&\le&\frac{1}{3}(X_1^2+X_2^2+X_3^2)+\frac{2}{3}(X_1X_3+X_2X_1+X_3X_2) 
\\&=& \frac{1}{3}(X_1+X_2+X_3)^2.
\end{eqnarray*}
Again using that $\gamma$ is a word curve triple, $X_1+X_2+X_3=\frac{1}{2}L(\gamma)$, and so
$$
|M(\gamma)|
\le \frac{1}{12}(X_1+X_2+X_3)^2
\le\frac{1}{12} \left(\frac{1}{2}L(\gamma)\right)^2.
$$
Solving for $L(\gamma)$, 
$$
4\sqrt{3|M(\gamma)|}\le L(\gamma).
$$ \end{proof}

We are now ready to prove Theorem~\ref{thm: calc}.

\begin{reptheorem}{thm: calc}
Let \(L=L_1\cup L_2\cup L_3\) be a 3-component link with vanishing pairwise linking numbers. Then, \(C(L)\geq2\lceil{\sqrt{3|\mu_{123}(L)|}}\rceil\). 
\end{reptheorem}

\begin{proof}
Let $L$ be a 3-component link. Let $F$ be a C-complex for $L$ and $\gamma=(\gamma_{12}(w_3), \gamma_{23}(w_1), \gamma_{31}(w_2))$ be the resulting word curve triple.  As each clasp in $F_i\cap F_j$ contributes a letter to the clasp words $w_i$ and $w_j$, and each of these contribute a length 1 line segment to $\gamma(w_i)$ and $\gamma(w_j)$, $L(\gamma)$ is two times $C(F)$, the number of clasps in $F$. By Theorem~\ref{AADG theorem} $M(\gamma) = \mu_{123}(L)$.

By Lemma~\ref{lemma: do it for rectangles}, then 
$$
2C(F)=L(\gamma)\ge 4\sqrt{3|\mu_{123}(L)|}.
$$
Division by 2 and taking the minimum of $C(F)$ amongst all C-complexes for $L$ completes the proof. 
\end{proof}

\section{examples of links with small clasp number}\label{sect:examples}

In this section we explain how to produce $3$-component links $L$ realizing the bound of Theorem~\ref{thm: calc}.  In particular, by the end of this analysis we will have produced the links of Figure~\ref{fig: Optimal example}.     We begin with a lemma which decribes the casewise structure of this bound.  

\begin{lemma}\label{lem:casewise}
   Let $M,k\in\mathbb{N}\cup\{0\}$ and $3k^2<M\leq3(k+1)^2$.  Then  $$2\ceil{\sqrt{3M}}=\begin{cases}
    6k+2 & 3k^2<M\leq 3k^2+2k \\
    6k+4 & 3k^2+2k<M\leq 3k^2+4k+1 \\
    6(k+1) & 3k^2+4k+1<M\leq 3(k+1)^2.
\end{cases}$$
\end{lemma}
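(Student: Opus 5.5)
The argument is elementary. It rests on the fact that for a nonnegative real $t$ and an integer $n\ge 1$, $\ceil{t}=n$ if and only if $n-1<t\le n$. Since both sides are monotone, for $t=\sqrt{3M}$ this condition is equivalent to $(n-1)^2<3M\le n^2$. So I will pin down $\ceil{\sqrt{3M}}$ in each of the three ranges by comparing $3M$ with consecutive squares. Because $M$ is an integer, strict inequalities can be traded for non-strict ones shifted by one wherever that is convenient.

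\emph{Case 1:} $3k^2<M\le 3k^2+2k$. I claim $\ceil{\sqrt{3M}}=3k+1$. The upper bound holds because $3M\le 9k^2+6k<(3k+1)^2=9k^2+6k+1$. The lower bound holds because $M\ge 3k^2+1$ gives $3M\ge 9k^2+3>(3k)^2$. Hence $3k<\sqrt{3M}\le 3k+1$, and $2\ceil{\sqrt{3M}}=6k+2$.

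\emph{Case 2:} $3k^2+2k<M\le 3k^2+4k+1$. I claim $\ceil{\sqrt{3M}}=3k+2$. The upper bound holds because $3M\le 9k^2+12k+3<(3k+2)^2=9k^2+12k+4$. The lower bound holds because $M\ge 3k^2+2k+1$ gives $3M\ge 9k^2+6k+3>(3k+1)^2$.

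\emph{Case 3:} $3k^2+4k+1<M\le 3(k+1)^2$. I claim $\ceil{\sqrt{3M}}=3k+3$. The upper bound holds because $3M\le 9(k+1)^2$. The lower bound holds because $M\ge 3k^2+4k+2$ gives $3M\ge 9k^2+12k+6>(3k+2)^2$.

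The three ranges are contiguous and together cover $3k^2<M\le 3(k+1)^2$, so this completes the statement. There is no real obstacle here. The only care needed is using the integrality of $M$ to obtain the strict lower inequalities, since each gap involves the residue of a square modulo $3$.
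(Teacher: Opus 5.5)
Your proof is correct, and it takes a more direct route than the paper. The paper never evaluates $\lceil\sqrt{3M}\rceil$ on a given range. Instead it studies the jumps $J_f(M)=f(M+1)-f(M)$ of $f(M)=2\lceil\sqrt{3M}\rceil$. It observes that $J_f(M)\neq 0$ exactly when a perfect square $n^2$ lies in $[3M,3M+3)$, i.e.\ when $M=\lfloor n^2/3\rfloor$. Splitting $n=3k,3k+1,3k+2$ gives the breakpoints $3k^2$, $3k^2+2k$, $3k^2+4k+1$; the paper matches these with the breakpoints of the piecewise function and finishes by induction from $M=3k^2$.

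You instead sandwich $3M$ between consecutive squares on each range. This needs no induction or base case. It also needs no separate argument that the jump \emph{sizes} agree, a point the paper's matching of jump locations leaves implicit. (For $M\ge 1$ each jump of $f$ is exactly $2$ because $\sqrt{3M+3}-\sqrt{3M}<1$. At $M=0$ the jump is $4$, corresponding to the coincident breakpoints $3k^2=3k^2+2k=0$ when $k=0$.)

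In exchange, the paper's route explains where the cutoffs come from: they are $\lfloor n^2/3\rfloor$ sorted by $n \bmod 3$. That is how one would discover the case split rather than merely verify it.

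A minor quibble: in Case 1 the lower bound $3M>9k^2$ already follows from $M>3k^2$. So integrality of $M$ is only needed for the lower bounds in Cases 2 and 3.
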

\begin{proof}
We start by giving the two sides of the claimed equality names, let $f(M)=2\ceil{\sqrt{3M}}$ and 
\\
$g(M)=\begin{cases}
    6k+2 & 3k^2<M\leq 3k^2+2k \\
    6k+4 & 3k^2+2k<M\leq 3k^2+4k+1 \\
    6(k+1) & 3k^2+4k+1<M\leq 3(k+1)^2
\end{cases}$.  Our argument is motivated by looking at the jumps of these functions. Let $J_f(M)=f(M+1)-f(M)$ and $J_g(M)=g(M+1)-g(M)$.  By its very definition $J_g(M)=0$ unless $M=3k^2$ or $3k^2+2k$ or $3k^2+4k+1$ with $k\in \N$.  By inspecting the graph of Figure~\ref{fig:graph}, we see that the jumps of $f$ seem to agree with the jumps of $g$.

\begin{figure}[htb]
\begin{tikzpicture}[scale=.3]
\draw(0,0)--(23,0);
\draw(0,0)--(0,18);
   \foreach \M in {1,...,23}
   {\draw[fill=black] ({\M},{2*ceil(sqrt(3*\M))}) circle(.1);
   \draw({\M},0)--({\M},.5);}
   \foreach \M in {1,...,17}
   {\draw(0,{\M})--(.5,{\M});}
   \foreach \K in {1,2}
   {
   \draw[dashed] (3*\K*\K,0)--(3*\K*\K,18);
   \node[rotate=90,right] at (3*\K*\K+.5,1) {\tiny{$x=3\cdot\K^2$}};
   \draw[dashed] (3*\K*\K+2*\K,0)--(3*\K*\K+2*\K,18);
   \node[rotate=90,right] at (3*\K*\K+2*\K+.5,1) {\tiny{$x=3\cdot\K^2+2\cdot\K$}};
   \draw[dashed] (3*\K*\K+4*\K+1,0)--(3*\K*\K+4*\K+1,18);
   \node[rotate=90,right] at (3*\K*\K+4*\K+1+.5,1) {\tiny{$x=3\cdot\K^2+4\cdot\K+1$}};
   }
   \draw[dashed] (1,0)--(1,18);
   \node[rotate=90] at (1+.5,12) {\tiny{$x=3\cdot0^2+4\cdot0+1$}};
   
\end{tikzpicture}
\caption{$y=2\ceil{\sqrt{3M}}$, with vertical lines where $J_g(x)>0$.}\label{fig:graph}
\end{figure}

Verifying this, $J_f(M)\neq 0$ if and only if there is a perfect square in the interval $[3M,3(M+1))$.  Thus we have three cases, either $3M\le (3k)^2<3(M+1)$, $3M\le (3k+1)^2<3(M+1)$, or $3M\le (3k+2)^2<3(M+1)$, 
In each case $M=\floor{\sfrac{9k^2}{3}} = 3k^2$, 
$M=\floor{\sfrac{(3k+1)^2}{3}}=3k^2+2k$, 
or $M=\floor{\sfrac{(3k+2)^2}{3}} =3k^2+4k+1$.  This agrees with $J_g(M)$.  An induction with base case $M=3k^2$ completes the proof.\end{proof}

\begin{reptheorem}{thm:realization}
For any $M\in \Z$,  there exists a link L such that $\mu_{123}(L)=M$ and \(C(L)=2\ceil{\sqrt{3|M|}}.\) 
\end{reptheorem}
\begin{proof}
Since $\mu_{123}(L)$ changes by a sign if we reverse the orientation of any one component, it suffices to prove the theorem when $M\ge 0$.  When $M=0$ the needed example is given by the unlink, so we may assume $M\ge 1$.  The reader seeking efficiency is now invited to directly count the number of clasps and compute the triple linking numbers of the links of Figure~\ref{fig: Optimal example}.  After doing so and consulting with Lemma~\ref{lem:casewise}, the proof will be complete.  We will take a longer process to motivate these links.  

  We need to produce a set of instructions given a natural number $M$ yielding a link with $\mu_{123}(L)=M$ and \(C(L)=2\ceil{\sqrt{3M}}\).  We will find such a link by first producing a word curve triple with $M(\gamma)=M$ and $L(\gamma) = 4\ceil{\sqrt{3M}}$.   
 
Our proof is motivated by the cases of Lemma~\ref{lem:casewise}.     In the case $3k^2<M\le 3k^2+2k$, there are some $0\le \ell_1\le k$ and $0<\ell_2\le k$ so that $M=3k^2+\ell_1+\ell_2$.  The needed word curve now appears in Figure~\ref{subfig: Case 1}.  To construct this curve, start with three $k\times k$ squares and then add a $1\times \ell_1$ rectangle to the first and an $\ell_2\times 1$ rectangle to the second, so the the boundaries of these three curves still satisfy Definition~\ref{defn: word curve triple}.  By counting the area bounded by $\gamma$, $M(\gamma)  =k^2+\ell_1+k^2+\ell_2+k^2 = M$, and $L(\gamma) = 12k+4$, glancing at Lemma~\ref{lem:casewise} confirms that $L(\gamma) = 4\ceil{\sqrt{3M}}$, as desired.

The cases $3k^2+2k<M\le 3k^2+4k+1$ and $3k^2+4k+1<M\le 3(k+1)^2$, similarly appear in Figures~\ref{subfig: Case 2} and \ref{subfig: Case 3} respectively.  The same analysis as above reveals that $M(\gamma)=M$ and $L(\gamma) = 4\ceil{\sqrt{3M}}$.  In our next step, we translate these word curves into claspwords.

\begin{figure}[htb]

\begin{subfigure}[b]{0.85\textwidth}
\begin{tikzpicture}[scale=.3]
\draw[fill=black] (10,7) circle(.3);
\draw[->][thick] (10,7)--(10,8);
\draw[thick](0,0)--(9,0)--(9,7)--(10,7)--(10,9)--(0,9)--(0,0);
\node[below] at (4.55,0) {$k$};
\node[below, rotate=90] at (9,4) {$k-\ell_1$};
\node[below, rotate=90] at (10,8) {$\ell_1$};
\node[above] at (5,9) {$k+1$};
\node[above, rotate=90] at (0,4.5) {$k$};
\node at (4.5,4.5) {$\gamma_1$};
\end{tikzpicture}
\hfill
\begin{tikzpicture}[scale=.3]
\draw[thick](0,0)--(0,-1)--(4,-1)--(4,9)--(-5,9)--(-5,0)--(0,0);
\draw[fill=black] (0,-1) circle(.3);
\draw[->][thick] (0,-1)--(3,-1);
\node[below] at (2,-1) {$\ell_2$};
\node[below, rotate=90] at (4,4) {$k+1$};
\node[above] at (-.5,9) {$k$};
\node[above, rotate=90] at (-5,4.5) {$k$};
\node[below] at (-2.5,0) {$k-\ell_2$};
\node at (-.5,4) {$\gamma_2$};
\end{tikzpicture}
\hfill
\begin{tikzpicture}[scale=.3]
\draw[thick](0,0)--(9,0)--(9,9)--(0,9)--(0,0);
\node[below] at (4.5,0) {$k$};
\node[below, rotate=90] at (9,4.5) {$k$};
\node[above] at (4.5,9) {$k$};
\node[above, rotate=90] at (0,4.5) {$k$};
\node at (4.5,4.5) {$\gamma_3$};
\draw[fill=black] (0,0) circle(.3);
\draw[thick, ->] (0,0)--(7,0);
\end{tikzpicture}
\subcaption{$\gamma$ when $M=3k^2+\ell_1+\ell_2$  and $0\le \ell_1\le k$ and $0<\ell_2\le k$}\label{subfig: Case 1}
\end{subfigure}

\begin{subfigure}[b]{0.85\textwidth}
\begin{tikzpicture}[scale=.3]
\draw[fill=black] (10,7) circle(.3);
\draw[->][thick] (10,7)--(10,8);
\draw[thick](0,0)--(9,0)--(9,7)--(10,7)--(10,9)--(0,9)--(0,0);
\node[below] at (4.55,0) {$k$};
\node[below, rotate=90] at (9,4) {$k-\ell_1$};
\node[below, rotate=90] at (10,8) {$\ell_1$};
\node[above] at (5,9) {$k+1$};
\node[above, rotate=90] at (0,4.5) {$k$};
\node at (4.5,4.5) {$\gamma_1$};
\end{tikzpicture}
\hfill
\begin{tikzpicture}[scale=.3]
\draw[thick](1,0)--(1,-1)--(5,-1)--(5,9)--(-5,9)--(-5,0)--(1,0);
\draw[fill=black] (1,-1) circle(.3);
\draw[->][thick] (1,-1)--(4,-1);
\node[below] at (3,-1) {$\ell_2$};
\node[below, rotate=90] at (5,4) {$k+1$};
\node[above] at (0,9) {$k+1$};
\node[above, rotate=90] at (-5,4.5) {$k$};
\node[below] at (-2,0) {$k+1-\ell_2$};
\node at (-.5,4) {$\gamma_2$};
\end{tikzpicture}
\hfill
\begin{tikzpicture}[scale=.3]
\draw[thick](0,0)--(9,0)--(9,10)--(0,10)--(0,0);
\node[below] at (4.5,0) {$k$};
\node[below, rotate=90] at (9,5) {$k+1$};
\node[above] at (4.5,10) {$k$};
\node[above, rotate=90] at (0,5) {$k+1$};
\node at (4.5,4.5) {$\gamma_3$};
\draw[fill=black] (0,0) circle(.3);
\draw[thick, ->] (0,0)--(7,0);
\end{tikzpicture}
\subcaption{$\gamma$ when $M=3k^2+2k+\ell_1+\ell_2$  and $0\le \ell_1\le k$ and $0<\ell_2\le k+1$}\label{subfig: Case 2}
\end{subfigure}

\begin{subfigure}[b]{0.85\textwidth}
\begin{tikzpicture}[scale=.3]
\draw[fill=black] (10,7) circle(.3);
\draw[->][thick] (10,7)--(10,8);
\draw[thick](0,0)--(9,0)--(9,7)--(10,7)--(10,10)--(0,10)--(0,0);
\node[below] at (4.55,0) {$k$};
\node[below, rotate=90] at (9,3.5) {$k+1-\ell_1$};
\node[below, rotate=90] at (10.5,8) {$\ell_1$};
\node[above] at (5,10) {$k+1$};
\node[above, rotate=90] at (0,5) {$k+1$};
\node at (4.5,4.5) {$\gamma_1$};
\end{tikzpicture}
\hfill
\begin{tikzpicture}[scale=.3]
\draw[thick](1,0)--(1,-1)--(5,-1)--(5,9)--(-5,9)--(-5,0)--(1,0);
\draw[fill=black] (1,-1) circle(.3);
\draw[->][thick] (1,-1)--(4,-1);
\node[below] at (3,-1) {$\ell_2$};
\node[below, rotate=90] at (5,4) {$k+1$};
\node[above] at (0,9) {$k+1$};
\node[above, rotate=90] at (-5,4.5) {$k$};
\node[below] at (-2,0) {$k+1-\ell_2$};
\node at (-.5,4) {$\gamma_2$};
\end{tikzpicture}
\hfill
\begin{tikzpicture}[scale=.3]
\draw[thick](0,0)--(10,0)--(10,10)--(0,10)--(0,0);
\node[below] at (5,0) {$k+1$};
\node[below, rotate=90] at (10,5) {$k+1$};
\node[above] at (5,10) {$k+1$};
\node[above, rotate=90] at (0,5) {$k+1$};
\node at (4.5,4.5) {$\gamma_3$};
\draw[fill=black] (0,0) circle(.3);
\draw[thick, ->] (0,0)--(7,0);
\end{tikzpicture}
\subcaption{$\gamma$ when $M=3k^2+4k+1+\ell_1+\ell_2$ and $0\le \ell_1\le k+1$ and $0<\ell_2\le k+1$}\label{subfig: Case 3}
\end{subfigure}

\caption{The word curve triples, $\gamma=(\gamma_1,\gamma_2,\gamma_3)$ enclosing total area $M$ while minimizing total length.  All of these are oriented counterclockwise.}\label{fig: needed word curve triples}
\end{figure}

  In the case of Figure~\ref{subfig: Case 1}, we start at the decorated point and read off a claspword
  \begin{equation*}\label{eqn:case1}
  \begin{array}{l}
  w_{1}=x_3^{\ell_1}x_2^{-k-1}x_3^{-k}x_2^kx_3^{k-\ell_1}x_2,
  \\
  w_{2}=x_3^{\ell_2}x_1^{k+1}x_3^{-k}x_1^{-k}x_3^{k-\ell_2}x_1^{-1},~\text{and}
  \\
  w_{3}=x_1^kx_2^kx_1^{-k}x_2^{-k}.
  \end{array}
  \end{equation*}
  Similarly, in the case of Figure~\ref{subfig: Case 2}, 
   \begin{equation*}\label{eqn:case2}
  \begin{array}{l}
  w_{1}=x_3^{\ell_1}x_2^{-k-1}x_3^{-k}x_2^kx_3^{k-\ell_1}x_2,
  \\
  w_{2}=x_3^{\ell_2}x_1^{k+1}x_3^{-k-1}x_1^{-k}x_3^{k+1-\ell_2}x_1^{-1},~\text{and}
  \\
  w_{3}=x_1^kx_2^{k+1}x_1^{-k}x_2^{-k-1}.
  \end{array}
  \end{equation*}
  In the case of Figure~\ref{subfig: Case 3}, 
   \begin{equation*}\label{eqn:case3}
  \begin{array}{l}
  w_{1}=x_3^{\ell_1}x_2^{-k-1}x_3^{-k-1}x_2^kx_3^{k+1-\ell_1}x_2,
  \\
  w_{2}=x_3^{\ell_2}x_1^{k+1}x_3^{-k-1}x_1^{-k}x_3^{k+1-\ell_2}x_1^{-1},~\text{and}
  \\
  w_{3}=x_1^{k+1}x_2^{k+1}x_1^{-k-1}x_2^{-k-1}.
  \end{array}
  \end{equation*}

 Notice these these can be combined into a single case as 

  \begin{equation}\label{eqn:combineCase}
  \begin{array}{l}
  w_{1}=x_3^{\ell_1}x_2^{-\alpha-1}x_3^{-\beta}x_2^\alpha x_3^{\beta-\ell_1}x_2,
  \\
  w_{2}=x_3^{\ell_2}x_1^{\alpha+1}x_3^{-\gamma}x_1^{-\alpha}x_3^{\gamma-\ell_2}x_1^{-1},~\text{and}
  \\
  w_{3}=x_1^\beta x_2^\gamma x_1^{-\beta}x_2^{-\gamma}.
  \end{array}
  \end{equation}
  where $(\alpha,\beta,\gamma)\in\{(k,k,k), (k,k,k+1),(k,k+1, k+1)\}$ and $0\le \ell_1\le \beta$ and $1\le \ell_2\le \gamma$. It is now straightforward to construct three disks intersecting in clasps to result in these claspwords.  One such example appears in Figure~\ref{fig: First Case}, completing the proof.  \end{proof}

\bibliographystyle{alpha}
\bibliography{biblio}  

\end{document}